\numberwithin{equation}{section}
\theoremstyle{plain}
\newtheorem{theorem}{Theorem}[section]
\newtheorem{proposition}{Proposition}[section]
\newtheorem{example}{Example}[section]
\newtheorem{remark}{Remark}[section]
\newcommand{\R}{\mathbb R}
\newcommand{\N}{\mathbb N}
\newcommand{\Z}{\mathbb Z}
\begin{document}

\begin{frontmatter}
\title{Discrete Stein characterizations and discrete information distances}
\runtitle{Stein characterizations and information distances}
\begin{aug}
\author{\fnms{Christophe} \snm{Ley}\thanksref[1]{1}\ead[label=e1]{chrisley@ulb.ac.be}}
\and
\author{\fnms{Yvik} \snm{Swan}\thanksref[2]{2}\ead[label=e2]{yvswan@ulb.ac.be}}
\thankstext[1]{1}{Supported by a Mandat de Charg\'e de Recherche from the Fonds National de la Recherche Scientifique, Communaut\'e fran\c{c}aise de Belgique. Christophe Ley is also member of ECARES.}
\thankstext[2]{2}{Supported by a Mandat de Charg\'e de Recherche from the Fonds National de la Recherche Scientifique, Communaut\'e fran\c{c}aise de Belgique.}
\runauthor{C. Ley and Y. Swan.}

\affiliation{ E.C.A.R.E.S., Universit\'e Libre de Bruxelles}
\address{Department of Mathematics\\
Universit\'e Libre de Bruxelles\\
Campus Plaine -- CP210\\
B-1050 Brussels\\
\printead{e1}, \printead*{e2}}
\end{aug}

\begin{abstract}
We construct two different Stein characterizations of discrete distributions and use these to provide  a natural connection between Stein characterizations for  discrete distributions and discrete  information functionals.  
\end{abstract}

\begin{keyword}[class=AMS]
\kwd[Primary ]{60K35}
 \kwd[; secondary ]{94A17}
\end{keyword}

\begin{keyword}
\kwd{Discrete density approach}
\kwd{discrete Fisher information}
\kwd{scaled Fisher information}
\kwd{Stein characterizations}
\kwd{Total variation distance}
\end{keyword}

\end{frontmatter}

\section{Foreword and notations}

 The purpose of this work is to  construct an explicit connection between discrete Stein characterizations and discrete information functionals (see \cite{LS11b} where similar considerations are discussed for continuous distributions).   In doing  so we also provide two general Stein characterizations of discrete distributions, as well as a family of identities relating differences between expectations with what we call \emph{generalized score functions}.    In the context of Poisson approximation, our results   allow in particular  to construct bounds between the total variation distance and (i) the so-called  \emph{scaled Fisher information} used, e.g., in \cite{KHJ05}, as well as (ii) the   \emph{discrete Fisher information} used, e.g., in \cite{JM87}.  We refer the reader to \cite{J04, MJK07} and  \cite{BJKM10} for relevant references and similar inequalities.

Throughout the paper, we shall abuse of language and call discrete probability mass functions \emph{densities}. Also, to avoid ambiguities related to division by 0, we adopt the convention that, whenever an expression involves the division by an indicator function $\mathbb{I}_A$ for some measurable set $A$, we are multiplying the expression by the said indicator function. In particular note how ratios of the form $p(x)/p(x)$ with $p(x)$ some function do not necessarily simplify to 1. Finally, we adopt the convention that  sums running over  empty sets equal 0.


\section{First connection} \label{sec:results1}

We start with a discrete version of the so-called \emph{density approach} (see \cite{SDHR04,  CGS10, LS11b} for a description in the continuous case). 
 
 \begin{theorem}[Discrete density approach]  \label{theo:dda} Let $p$ be a density with support $S_p\subset \Z$. For the sake of convenience, we choose $S_p= [a, b] := \{a, a+1, \ldots, b\}$ with $a< b \in \Z\cup\{\pm \infty\}$. Let $\mathcal{F}_1(p)$ be the collection of all test functions $f : \Z \to \R$ such that $x\mapsto f(x)p(x)$ is bounded on $S_p$ and $f(a)=0$. Let $\Delta_x^+h(x) := h(x+1)-h(x)$ be the \emph{forward difference operator} and define  $\mathcal T_1(\cdot, p) : \Z^\star \to \R^\star$ through 
\begin{align} \mathcal T_1(f,p) : \Z \to \R : x \mapsto   \mathcal T_1(f, p)(x) := \dfrac{\Delta_x^+ (f(x) p(x))}{p(x)}\mathbb{I}_{S_p}(x). \label{eq:stein_dda_op}\end{align} 
Let $Z \sim p$ and let $X$ be a real-valued discrete random variable. 
\begin{enumerate}[(1)]
\item If $X \stackrel{\mathcal L}{=}ÊZ$ then ${\rm E}\left[\mathcal{T}_1(f, p)(X) \right]=0$ for all $f \in \mathcal{F}_1(p)$.
\item  If   ${\rm E}[\mathcal{T}_1(f, p)(X)]=0$ for all $f\in\mathcal{F}_1(p)$, then $X\,|\,X\in S_p\stackrel{\mathcal{L}}{=} Z$.
 \end{enumerate}
%
\end{theorem}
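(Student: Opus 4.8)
The plan is to prove the two implications separately: part (1) by a one-line telescoping identity, and part (2) by explicitly inverting the operator $\mathcal T_1(\cdot,p)$ on a rich enough class of functions.

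For part (1), assume $X\stackrel{\mathcal L}{=}Z\sim p$. Since $\mathcal T_1(f,p)$ vanishes off $S_p$ and $p>0$ on $S_p$, one has ${\rm E}[\mathcal T_1(f,p)(X)]=\sum_{x=a}^{b}\Delta_x^+(f(x)p(x))$, which telescopes to $f(b+1)p(b+1)-f(a)p(a)$ (understood as a limit of partial sums when $b=+\infty$). The term $f(a)p(a)$ is $0$ because $f(a)=0$, and $f(b+1)p(b+1)$ is $0$ because $b+1\notin S_p$ forces $p(b+1)=0$ when $b$ is finite; when $b=+\infty$ one uses instead that $f(x)p(x)\to0$ along $S_p$ (automatic, e.g., when $f$ is bounded). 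Hence the expectation vanishes.

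For part (2), fix $z_0\in S_p$; the goal is ${\rm P}(X=z_0\mid X\in S_p)=p(z_0)$. The key step is to solve, for each bounded $h:\Z\to\R$, the Stein equation $\mathcal T_1(f_h,p)(x)=(h(x)-{\rm E}[h(Z)])\mathbb{I}_{S_p}(x)$. Imposing $f_h(a)=0$ and $\Delta_x^+(f_h(x)p(x))=(h(x)-{\rm E}[h(Z)])p(x)$ on $S_p$ and summing from $a$ to $x-1$ forces the candidate
\[
f_h(x)=\frac{1}{p(x)}\sum_{y=a}^{x-1}\bigl(h(y)-{\rm E}[h(Z)]\bigr)p(y),\qquad x\in S_p,
\]
extended by $0$ outside $S_p$ (I take $a$ finite; for $a=-\infty$, where $f_h(a)=0$ should be read as $f_h(x)p(x)\to0$ at the lower endpoint, one anchors the summation at the other end instead). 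One then checks $f_h\in\mathcal F_1(p)$: $f_h(a)=0$ by the empty-sum convention; $|f_h(x)p(x)|\le\sum_{y\in S_p}|h(y)-{\rm E}[h(Z)]|p(y)\le2\|h\|_\infty$, so $f_hp$ is bounded on $S_p$; and, because $\sum_{y=a}^{b}(h(y)-{\rm E}[h(Z)])p(y)=0$, the difference equation indeed holds on all of $S_p$ (in particular at $x=b$ when $b$ is finite, where $\Delta_b^+(f_h(b)p(b))=-\sum_{y=a}^{b-1}(h(y)-{\rm E}[h(Z)])p(y)=(h(b)-{\rm E}[h(Z)])p(b)$).

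Feeding $f_h$ into the hypothesis gives $0={\rm E}[\mathcal T_1(f_h,p)(X)]={\rm E}[h(X)\mathbb{I}_{S_p}(X)]-{\rm E}[h(Z)]\,{\rm P}(X\in S_p)$ for every bounded $h$. Taking $h=\mathbb{I}_{\{z_0\}}$ yields ${\rm P}(X=z_0)=p(z_0)\,{\rm P}(X\in S_p)$; dividing by ${\rm P}(X\in S_p)$ (the claim being vacuous if this probability is $0$) gives ${\rm P}(X=z_0\mid X\in S_p)=p(z_0)$, and since $z_0\in S_p$ is arbitrary this is exactly $X\mid X\in S_p\stackrel{\mathcal L}{=}Z$. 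I expect the construction and verification in part (2) to be the main obstacle — producing $f_h$ and confirming it lies in $\mathcal F_1(p)$, which is where boundedness of $h$ and some care at the endpoints (and with the division-by-$p$ convention, harmless since $p>0$ on $S_p$) are needed — whereas part (1) is routine apart from justifying the vanishing of $f(x)p(x)$ at an infinite upper endpoint.
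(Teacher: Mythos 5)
Your proposal is correct and follows essentially the same route as the paper: part (1) by telescoping, and part (2) by explicitly solving the Stein equation with the anchored sum $\frac{1}{p(x)}\sum_{y=a}^{x-1}(\cdot)p(y)$, checking membership in $\mathcal{F}_1(p)$ (including the endpoint $x=b$ via the total sum vanishing), and then specializing to indicator test functions. The only cosmetic difference is that you use point indicators $\mathbb{I}_{\{z_0\}}$ to match probability mass functions, whereas the paper uses half-line indicators $\mathbb{I}_{(-\infty,z]}$ to match distribution functions; the two are interchangeable here.
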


We draw the reader's attention to the similarity between the operator $\mathcal T_1$ and the  operators introduced in \cite{LS11a, LS11b}:  in the terminology of \cite{LS11a},  our operator \eqref{eq:stein_dda_op} allows for a discrete ``location''-based parametric interpretation. 


\begin{proof} The first statement is trivial. To see (2), consider for $z\in \Z$ the functions $f_z^p$ defined through
$$ f_z^p: \Z\to \R: x \mapsto \frac{1}{p(x)} \sum_{k=a}^{x-1} l_z(k)p(k)$$
with $l_z(k):= ({\mathbb{I}}_{(- \infty, z]}(k) - {\rm P}_p(X \le z))\mathbb{I}_{S_p}(k)$ and  ${\rm P}_p(X\le z):=\sum_{k=-\infty}^z p(k)$.  
It is evident that $x\mapsto f_z^p(x)p(x)$ is bounded and that $f_z^p(a)=0$ by our convention on sums, hence $f_z^p \in \mathcal{F}_1(p)$ for all $z$. Moreover we have $\Delta_x^+(f_z^p(x)p(x))= l_z(x)p(x)$. This result is direct for $x<b$; for $x=b$, $\Delta_x^+(f_z^p(x)p(x))|_{x=b}=f_z^p(b+1)p(b+1)-f_z^p(b)p(b)=- \sum_{k=a}^{b-1} l_z(k)p(k)=l_z(b)p(b)$ since $\sum_{k=a}^{b} l_z(k)p(k)=0$ by definition of $l_z$. It follows  that this forward difference satisfies, for all $z$,  the so-called \emph{Stein equation}
$$
\mathcal{T}_1(f_z^p, p)(x) = l_z(x).
$$
Consequently, we can use ${\rm E}\left[\mathcal{T}_1(f_z^p, p)(X)\right] =0$ to obtain   
$${\rm P}(X\leq z\cap X\in S_p)={\rm P}(X\in S_p){\rm P}_p(X\leq z)$$
 for all $z\in\Z$. In other words, provided that ${\rm P}(X\in S_p)>0$, ${\rm P}(X\leq z\,|\, X\in S_p)={\rm P}_p(X\leq z)={\rm P}(Z\leq z)$ for all $z\in\Z$, whence the claim.
 \end{proof}

Note that the choice of a ``connected'' support is for convenience only, and straightforward arguments allow to adapt the result to supports of the form $[a, b] \cup [c, d]$ with $c>b$. Likewise the use of a forward difference  in the expression of the operator is purely arbitrary and minor adaptations (e.g., setting $f(b)=0$ instead of $f(a)=0$) allow to reformulate \eqref{eq:stein_dda_op}   in terms of backward differences as well.

\begin{example} It is perhaps informative to see how   the operator $\mathcal{T}_1(f,p)$ spells out in certain specific examples. 
\begin{enumerate}[1.]
\item Take $p(x)= e^{-\lambda} \lambda^x/x!\,\mathbb{I}_\N(x)$ the   density of a mean-$\lambda$ Poisson random variable. Then (abusing notations) $\mathcal{F}_1(Po(\lambda))$ contains the set of bounded functions $f$ with $f(0)=0$, and simple computations show that the operator becomes 
$$\mathcal T_1(f, Po(\lambda))(x) = \left(\frac{\lambda}{x+1}f(x+1) - f(x) \right)\mathbb{I}_\N(x).$$
\item Take $p$ to be a member of Ord's family, i.e. suppose that there exist $s(x)$ and $\tau(x)$ such that 
$$\frac{p(x+1)}{p(x)} = \frac{s(x) + \tau(x)}{s(x+1)}.$$
For an explanation on these notations see \cite{S01}. The collection $\mathcal{F}_1((s,\tau))$ contains the set of all functions of the form $f(x)=f_0(x)s(x)$ with $f_0$ bounded and, for these $f$, the operator writes out 
$$\mathcal T_1(f, (s, \tau))(x) = (s(x)+\tau(x))f_0(x+1)\mathbb{I}_{[a,b]}(x+1) - f_0(x)s(x)\mathbb{I}_{[a,b]}(x).$$
We retrieve, up to some minor modifications, the operator presented in \cite{S01}; using the backward difference operator and functions $f$ of the form $f_0(x)(s(x)+\tau(x))$ yields exactly the operator proposed in that paper.
\item Write $p$ as a Gibbs measure, i.e.  $p(x) = e^{V(x)} \omega^x/(x! \mathcal Z)\mathbb{I}_{[0,N]}(x)$ with $N$ some positive integer, $\omega>0$ fixed, $V$ a function mapping $\N$ to $\R$ and $\mathcal Z$ the normalizing constant. For an explanation on the notations see \cite{ER08}. The collection $\mathcal{F}_1((V,\omega))$ contains the set of all functions of the form $f(x)=xf_0(x)$ with $f_0$ bounded and, for these $f$, the operator is of the form $\mathcal T_1(f, (V, \omega))(x) = f_0(x+1)e^{V(x+1)-V(x)} \omega \mathbb{I}_{[0,N]}(x+1)- xf_0(x)\mathbb{I}_{[0, N]}(x)$. Supposing, as in \cite{ER08}, that $f_0(N+1)=0$, the latter operator simplifies to
$$\mathcal T_1(f, (V, \omega))(x) = \left(f_0(x+1)e^{V(x+1)-V(x)} \omega - xf_0(x)\right)\mathbb{I}_{[0, N]}(x),$$
which corresponds   to the Stein operator presented in \cite{ER08}. 
\end{enumerate}
\end{example}

Following the methodology introduced in \cite{LS11b}, the next step consists in uncovering  a  factorization property  of the operator~\eqref{eq:stein_dda_op} for two densities $p$ and~$q$. 
It will be fruitful  to consider distributions $p$ and $q$ having   non-equal supports. We choose to fix, for the sake of convenience (and for this sake only),  $S_p = \N$ and $S_q=[0,\ldots,N]$, for $N \in \N \cup \{\infty\}$. Then, since we can   always write  $1 =   {q(x)}/{q(x)} + \mathbb I_{[N+1, \ldots]}(x)$,  we get
  \begin{align}\label{eq:step1}  \mathcal T_1(f,p)(x) & =  \frac{\Delta^+_x \left(f(x) p(x) \dfrac{q(x)}{q(x)}\right)}{p(x)} + \frac{\Delta_x^+\left(f(x) p(x) \mathbb I_{[N+1, \ldots]}(x)\right)}{p(x)}. \end{align}
 Now recall  the  product rule for discrete derivatives  $$\Delta_x^+ (h(x) g(x)) = h(x+1) \Delta_x^+ g(x)+g(x) \Delta_x^+ h(x).$$
 Applying this and keeping in mind that we have set $S_q\subset S_p$, the    first term on the rhs of \eqref{eq:step1} becomes
  \begin{align*} &    \frac{f(x+1)q(x+1)}{p(x)} \Delta_x^+ \left(\dfrac{p(x)}{q(x)}\right)  + \frac{\Delta_x^+(f(x)q(x))}{p(x)}  \dfrac{p(x)}{q(x)} \\
														& \quad \quad =  f(x+1)  \left(\dfrac{p(x+1)}{p(x)}- \dfrac{q(x+1)}{q(x)}\right)\mathbb I_{[0,\ldots,N]}(x)  + \frac{\Delta_x^+(f(x)q(x))}{q(x)}. \end{align*}
Therefore, letting 
\begin{equation} \label{eq:score_d1} r_1(p,q)(x) := \left(\frac{p(x+1)}{p(x)} - \frac{q(x+1)}{q(x)}\right)\mathbb I_{[0,\ldots,N]}(x),\end{equation}
we have just shown that, for all   $f \in \mathcal{F}_1(p)\cap \mathcal{F}_1(q)$, we have the \emph{factorization property}
\begin{equation}\label{eq:fac_prop_1} \mathcal T_1(f,p)(x) = \mathcal T_1(f,q)(x) + f(x+1) r_1(p,q)(x) +e^N_{f,p}(x),\end{equation}
where $$e^N_{f, p}(x) := f(x+1)p(x+1)/p(x) \mathbb I_{[N, \ldots]}(x) - f(x) \mathbb I_{[N+1, \ldots]}(x).$$

\begin{remark} The statements above (and their consequences) are easily adapted to situations where $S_p \subset S_q$; having in mind the context of a Poisson target $p$ explains our willingness to restrict  our choice. \end{remark}
%
Now  let $l:\Z\to\R$ be a function such that ${\rm E}_p[l(X)]$ and ${\rm E}_q[l(X)]$ exist, with ${\rm E}_r[l(X)]:=\sum_{k\in S_r}l(k)r(k)$ for a density $r$ with support $S_r$. Still following \cite{LS11b}, it is immediate  that the function 
\begin{equation}\label{eq:steineq_sol3} 
f_{1, l}^p : \Z \to \R : x \mapsto \frac{1}{p(x)}\sum_{k=0}^{x-1}(l(k)- {\rm E}_p[l(X)]) p(k)\end{equation}
is solution of the so-called Stein equation $\mathcal T_1(f, p)(x) = l(x) - {\rm E}_p[l(X)]$, so that, taking expectations and using~(\ref{eq:fac_prop_1}), we get 
\begin{equation} \label{eq:fundeq2}  {\rm E}_q[l(X)] -{\rm E}_p[l(X)] =  
{\rm E}_q [{f}_{1,l}^p(X+1) r_1(p,q)(X)] +e^N_{p,q}(l),
\end{equation}
with $e^N_{p,q}(l) :=  q(N)f_{1, l}^p(N+1)p(N+1)/p(N).$
 
\begin{remark} The error term $e^N_{p,q}(l)$ in \eqref{eq:fundeq2} will be negligible as $N$ tends to infinity since, in general, the Stein solution $f_{1, l}^p$ will be bounded over $\N$. This latter fact also ensures that $f_{1, l}^p$ belongs to $\mathcal{F}_1(q)$. \end{remark}

We will apply \eqref{eq:fundeq2} in the context of a Poisson target distribution in Section~\ref{sec:appli}.  In particular we will show how our approach provides a connection between the so-called \emph{total variation distance} (as well as many other probability distances)  and the  scaled Fisher information in use for information theoretic approaches to Poisson approximation problems (see \cite{KHJ05, MJK07, BJKM10}).

\section{A second connection} \label{sec:results2}

The construction from the previous  section  (i.e. the  factorization \eqref{eq:fac_prop_1}, the score function \eqref{eq:score_d1} and the identity \eqref{eq:fundeq2}) is by no means unique, nor is the initial characterization from Theorem \ref{theo:dda}. There are, in fact,  an infinite number of variations on the different steps outlined above, each providing a connection between probability distances and different forms of  information distances. Now  it appears that, in the world of Poisson approximation,  the  scaled Fisher information is not the only ``natural'' measure of discrepancy  and  \cite{JM87} (followed later by \cite{BJKM10}) make use of another information  distance which they call  the discrete Fisher information.  We choose to show   how this specific distance can be obtained from our Stein characterizations as well. 

  In \cite{LS11a} we propose a construction of  Stein characterizations tailored for \emph{parametric   densities}, that is   densities depending on some real-valued parameter. 
In what follows, we shall denote by $p_\theta(x)$ the parametric density with parameter $\theta$ belonging to the parameter space $\Theta$. For the sake of simplicity we consider families with support $S_p= [a, b] := \{a, a+1, \ldots, b\}$ with $a< b \in \Z\cup\{\pm \infty\}$  not depending on $\theta$; we also suppose that, for all $x$, the  function $\theta\mapsto p_\theta(x)$ is continuously differentiable. (A similar result can also be obtained for integer-valued parameters $\theta$.) We then obtain the following result, whose proof is omitted because it is directly inspired from~\cite{LS11a} and  runs  along the same lines as the proof of Theorem~\ref{theo:dda}.

\begin{theorem}[Parametric discrete density approach] \label{theo:pdda} For $\theta$ an interior point of $\Theta$, let $p(x):=p_\theta(x)$ be a parametric density with support $S_p\subset\Z$ and define $\tilde{p}(x) := \partial_\theta (p_{\theta}(x) /p_{\theta}(a) )$.      Let $\mathcal{F}_2(p)$ be the collection of all test functions $f:\Z\rightarrow\R$ such that $x\mapsto f(x)\tilde{p}(x)$ is bounded on $S_p$. Define the operator $\mathcal{T}_2(\cdot,p):\Z^*\rightarrow\R^*$ through 
$$
\mathcal{T}_2(f,p):\Z\rightarrow\R:x\mapsto \mathcal{T}_2(f,p)(x):=\dfrac{ \Delta_x^+(f(x)\tilde{p}(x))}{p(x)}\mathbb{I}_{S_p}(x).
$$
Let $Z \sim p$ and let $X$ be a real-valued discrete random variable. 
\begin{enumerate}[(1)]
\item If $X \stackrel{\mathcal L}{=}ÊZ$ then ${\rm E}\left[\mathcal{T}_2(f, p)(X) \right]=0$ for all $f \in \mathcal{F}_2(p)$.
\item  If   ${\rm E}[\mathcal{T}_2(f, p)(X)]=0$ for all $f\in\mathcal{F}_2(p)$, then $X\,|\,X\in S_p\stackrel{\mathcal{L}}{=} Z$.
 \end{enumerate}
\end{theorem}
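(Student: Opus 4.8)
The plan is to mimic, almost line for line, the proof of Theorem~\ref{theo:dda}, the single structural difference being that the boundary constraint ``$f(a)=0$'' built into $\mathcal{F}_1(p)$ is now carried by the function $\tilde p$ itself: since the support does not depend on $\theta$, one has $\tilde p(a)=\partial_\theta(p_\theta(a)/p_\theta(a))=\partial_\theta 1=0$, and likewise $\tilde p(b+1)=0$ when $b<\infty$ because $p_\theta(b+1)\equiv 0$. With this in hand, statement (1) is obtained exactly as in Theorem~\ref{theo:dda}: for $Z\sim p$ one has ${\rm E}[\mathcal{T}_2(f,p)(Z)]=\sum_{x\in S_p}\Delta_x^+(f(x)\tilde p(x))$, a telescoping sum whose boundary terms vanish because $x\mapsto f(x)\tilde p(x)$ is bounded on $S_p$ and equals $0$ at $x=a$ (and at $x=b+1$ when $b$ is finite).

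For statement (2) I would reproduce the construction of Stein solutions. Fix $z\in\Z$ and take $l_z(k):=(\mathbb{I}_{(-\infty,z]}(k)-{\rm P}_p(X\le z))\mathbb{I}_{S_p}(k)$ exactly as in the proof of Theorem~\ref{theo:dda}. Set
\[
g_z^p(x):=\sum_{k=a}^{x-1}l_z(k)p(k),\qquad f_z^p(x):=\frac{g_z^p(x)}{\tilde p(x)},
\]
so that $f_z^p(x)\tilde p(x)=g_z^p(x)$. Since $|g_z^p(x)|\le 1$ for every $x$, the map $x\mapsto f_z^p(x)\tilde p(x)$ is bounded on $S_p$, hence $f_z^p\in\mathcal{F}_2(p)$; note that no analogue of ``$f(a)=0$'' needs to be imposed, because $\tilde p(a)=0$ already forces $g_z^p(a)=f_z^p(a)\tilde p(a)=0$, consistently with the empty-sum convention.

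Next I would verify the Stein equation $\mathcal{T}_2(f_z^p,p)(x)=l_z(x)$. For $x<b$ this is immediate from $\Delta_x^+g_z^p(x)=l_z(x)p(x)$; for $x=b$ (when $b$ is finite) one has $\Delta_b^+(f_z^p(b)\tilde p(b))=f_z^p(b+1)\tilde p(b+1)-g_z^p(b)=-\sum_{k=a}^{b-1}l_z(k)p(k)=l_z(b)p(b)$, the last equality because $\sum_{k=a}^{b}l_z(k)p(k)=0$ by the very definition of $l_z$. Feeding $f_z^p$ into the hypothesis ${\rm E}[\mathcal{T}_2(f,p)(X)]=0$ then gives ${\rm E}[l_z(X)]=0$, i.e. ${\rm P}(X\le z\cap X\in S_p)={\rm P}(X\in S_p){\rm P}_p(X\le z)$ for every $z\in\Z$; provided that ${\rm P}(X\in S_p)>0$ this yields ${\rm P}(X\le z\,|\,X\in S_p)={\rm P}_p(X\le z)={\rm P}(Z\le z)$ for all $z$, whence the claim.

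I do not expect a genuine obstacle, since the argument runs parallel to Theorem~\ref{theo:dda}; the one point requiring care is the bookkeeping around $\tilde p$. Beyond the identities $\tilde p(a)=0$ and $\tilde p(b+1)=0$ noted above, one needs $\tilde p$ not to vanish on $S_p\setminus\{a\}$ so that $f_z^p=g_z^p/\tilde p$ is a bona fide element of $\mathcal{F}_2(p)$; this is the implicit regularity of the parametric family inherited from \cite{LS11a} (it holds, for instance, whenever $\theta\mapsto p_\theta(x)/p_\theta(a)$ is strictly monotone for each $x>a$). The hypothesis that $\theta\mapsto p_\theta(x)$ is continuously differentiable is used only to make $\tilde p$ well defined; no differentiation under the summation sign is needed in the proof of the characterization itself.
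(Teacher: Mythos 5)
Your proof is correct and takes exactly the route the paper intends: the paper omits the proof of Theorem~\ref{theo:pdda} precisely because it ``runs along the same lines'' as that of Theorem~\ref{theo:dda}, and your argument --- the telescoping in (1) using $\tilde{p}(a)=0$ (and $\tilde{p}(b+1)=0$ when $b<\infty$), and in (2) the Stein solutions $f_z^p=g_z^p/\tilde{p}$ with $l_z$ as in Theorem~\ref{theo:dda}, yielding ${\rm P}(X\leq z\cap X\in S_p)={\rm P}(X\in S_p){\rm P}_p(X\leq z)$ --- is that same adaptation with $\tilde{p}$ playing the role of $p$ inside the test functions. Your bookkeeping caveat that $\tilde{p}$ should not vanish on $S_p\setminus\{a\}$ is the right point to flag and is consistent with the paper's implicit assumption (cf.\ its remark that $\tilde{p}$ has support $\N_0$ in the Poisson setting).
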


We attract the reader's attention to the fact that, contrarily to $\mathcal{F}_1(p)$ in Theorem~\ref{theo:dda}, the class of test functions $\mathcal{F}_2(p)$ here does not ask that $f(a)=0$. This comes from the fact that, by definition, $\tilde{p}(a)=0$, hence this requirement on the $f$ can be dropped.

Theorem~\ref{theo:pdda} allows to recover the well-known Stein operators and characterizations of the Poisson, geometric, binomial distributions, to cite but these; we refer the reader to \cite{LS11a} for intuition about the perhaps unusual form of the operator, as well as for explicit computations and examples. 

From here onwards we restrict our attention to distributions $p$ and $q$ with full support $\N$. Note that this entails that $\tilde{p}(x)$ and $q(x-1)$ share the same support $\N_0$. While not strictly necessary, this  assumption will yield considerable simplifications. It is, moreover,  in line with the related literature when a Poisson target is to be considered (see \cite{BJKM10}). 

Proceeding as in Section~\ref{sec:results1} (and keeping all supports implicit) we readily obtain 
\begin{align*}Ê  \mathcal T_2(f,p)(x) & =  \frac{ \Delta_x^+\left(f(x)q(x-1)\frac{\tilde{p}(x)}{q(x-1)}\right)}{p(x)} \\
					    & = \frac{ \Delta_x^+(f(x)q(x-1))}{q(x)}\frac{\tilde{p}(x+1)}{p(x)} + f(x) \frac{q(x-1)}{p(x)}   { \Delta_x^+\left(\frac{\tilde{p}(x)}{q(x-1)}\right)}. \end{align*}		
Straightforward simplifications then yield for $f\in\mathcal{F}_2(p)\cap\mathcal{F}_2(q)$ the factorization
\begin{equation}\label{eq:fac_prop_2} \mathcal T_2(f,p)(x) =  f(x) r_2(p,q)(x) + \frac{\Delta^+_x \left( f(x) q(x-1) \right)}{q(x)}\frac{{\tilde{p}(x+1)}}{p(x)}\end{equation}
with 
\begin{equation} \label{eq:score_d2} r_2(p, q)(x) := \frac{\tilde p(x+1)}{p(x)}  \frac{q(x-1)}{q(x)} - \frac{\tilde{p}(x)}{p(x)}.\end{equation}
  
  Now let $l:\Z\to \R$ be a function such that ${\rm E}_{p} [l(X)]$ and ${\rm E}_q [l(X)]$ exist, and define
\begin{equation}\label{eq:steineq_sol4} 
f_{2,l}^p(x) :\Z\rightarrow\R:x\mapsto \frac{1}{\tilde{p}(x)}\sum_{k=0}^{x-1}(l(k)- {\rm E}_p [l(X)]) p(k).\end{equation}
Then clearly $\mathcal T_2(f_{2,l}^p, p)(x) =l(x)- {\rm E}_p [l(X)]$ so that,  taking expectations on both sides of \eqref{eq:fac_prop_2} for this choice of test function, we obtain
\begin{align*}{\rm E}_q[l(X)] -{\rm E}_{p}[l(X)] & = 
{\rm E}_q [{f}_{2,l}^p(X) r_2(p, q)(X)]  \\
& \quad \quad \quad \quad +{\rm E}_q \left[ \frac{\Delta^+_x \left.\left( f_{2,l}^p(x) q(x-1) \right)\right|_{x=X}}{q(X)}\frac{{\tilde{p}(X+1)}}{p(X)}\right].
\end{align*}
Finally suppose that $ {{\tilde{p}(X+1)}}/{p(X)}$ simplifies to a constant (as is the case for a Poisson target). Then straightforward calculations lead to the analog of \eqref{eq:fundeq2} for the  score function $r_2$, namely 
\begin{equation} \label{eq:fundeq4}  {\rm E}_q[l(X)] -{\rm E}_{p}[l(X)] = 
{\rm E}_q [{f}_{2,l}^p(X) r_2(p, q)(X)].
\end{equation}
As will be shown in  Section~\ref{sec:appli}, specifying a Poisson distribution for the target $p$ in \eqref{eq:fundeq4} yields the scaled score function whose variance is the so-called discrete Fisher information introduced in \cite{JM87}. 

\section{Applications to a Poisson target} \label{sec:appli}

Working as in \cite{LS11b}  it is easy to obtain,  from \eqref{eq:fundeq2} and \eqref{eq:fundeq4},   inequalities of the form
$$d_\mathcal{H}(p, q) :=  \sup_{h \in \mathcal H} \left| {\rm E}_q[h(X)] - {\rm E}_{p}[h(X)]\right| \le \kappa(p, q) \mathcal J(p,q),$$
where $\mathcal{H}$ is, as usual, a suitably chosen class of functions, $\kappa(p, q)$ are constants depending on both $p$ and $q$ and $\mathcal J(p,q)$ is a so-called \emph{information distance} between $p$ and $q$, which is given by the variance of one of the score functions \eqref{eq:score_d1} or \eqref{eq:score_d2} introduced in the two previous sections. The main difficulty then resides in computing the constants appearing in these inequalities and in putting the information distance $\mathcal{J}$ to good use.  Such computations are not the primary purpose of the present paper. Hence we  choose to focus on a Poisson target,  for which much is already known. From here onwards we therefore only consider $p = Po(\lambda)$, the mean-$\lambda$ Poisson density.

We first adapt the results from Section \ref{sec:results1}. The score function \eqref{eq:score_d1} becomes 
$$r_1(Po(\lambda),q)(x) = \frac{\lambda}{x+1} \left(1- \frac{(x+1)q(x+1)}{\lambda q(x)}\right)\mathbb{I}_{[0,\ldots,N]}(x)$$
so that  \eqref{eq:fundeq2} yields
\begin{equation} \label{eq:poissoncase1}   {\rm E}_q[l(X)] - {\rm E}_p[l(X)] =  {\rm E}_q \left[ \left(\frac{\sqrt{\lambda}{f}_{1,l}^p(X+1)}{X+1}\right) \sqrt{\lambda}\left(1- \frac{(X+1)q(X+1)}{\lambda q(X)} \right)\right]+e^N_{p,q}(l).
\end{equation}
One recognizes, in the rhs of \eqref{eq:poissoncase1},  the scaled score function whose variance yields the  scaled Fisher information
$$\mathcal K_1(Po(\lambda),q) := \lambda {\rm E}_q\left[\left(\frac{(X+1)q(X+1)}{\lambda q(X)} - 1\right)^2\right].$$ 
This information distance is subadditive over convolutions; this is  useful  when computing rates of convergence for sums  towards the Poisson distribution  (see, e.g., \cite{KHJ05, BJKM10}).  Using a  Poincar\'e inequality, \cite{KHJ05} show that, for $q$ a discrete distribution with mean $\lambda$,   
$$\| q- Po(\lambda)\|_{TV} \le \sqrt{2 \mathcal K_1(Po(\lambda),q)},$$
with $\|\cdot\|_{TV}$ indicating the \emph{total variation distance}. From \eqref{eq:poissoncase1} and H\"{o}lder's inequality we obviously recover a much more general result, namely 
\begin{align}\label{eq:holder1} d_{\mathcal{H}}(Po(\lambda), q) & = \sup_{h \in \mathcal H} \left| {\rm E}_q[h(X)] - {\rm E}_{Po(\lambda)}[h(X)]\right|\nonumber \\
												  & \le H_{1,\mathcal H}( Po(\lambda), q)\sqrt{\mathcal K_1(Po(\lambda),q)},\end{align}
where the constant 
$$H_{1,\mathcal H}(Po(\lambda), q) :=\sup_{h\in\mathcal{H}}\left( \sqrt{{\rm E}_q\left[\left(\frac{\sqrt \lambda {f}_{1,h}^p(X+1)}{X+1}\right)^2\right]}+\frac{e^N_{p,q}(h)}{\sqrt{\mathcal K_1(Po(\lambda),q)}}\right)$$
is some kind of general Stein (magic) factor. The  notation $H$ for these constants is borrowed from \cite{BJKM10}  where  similar relationships are obtained, within the context of   compound Poisson approximation. 

Likewise, in the notations of Section \ref{sec:results2},  we have
$\tilde{p}(x) =  {\lambda^{x-1}}/{(x-1)!}\mathbb{I}_{\N_0}(x)$
so that 
$ {{\tilde{p}(x+1)}}/{p(x)}  = e^{\lambda}\mathbb{I}_{\N}(x)$ and ${{\tilde{p}(x)}}/{p(x)}  =  e^\lambda{x}/{\lambda} \mathbb{I}_{\N}(x).$
Hence for all $q$ with full support $\N$ we get
$$r_2(Po(\lambda), q)(x) = e^{\lambda} \left(\frac{q(x-1)}{q(x)} - \frac{x}{\lambda}\right)\mathbb{I}_\N(x)$$
 so that  \eqref{eq:fundeq4} yields
\begin{equation} \label{eq:poissoncase2}   {\rm E}_q[l(X)] - {\rm E}_p[l(X)] =   {\rm E}_q \left[\left( \lambda^{-1} e^\lambda{f}_{2,l}^p(X)\right)\left(\frac{\lambda q(X-1)}{q(X)} -  {X}\right)\right].
\end{equation}
One recognizes, in the rhs of \eqref{eq:poissoncase2},  a special instance of the  Katti-Panjer score function introduced in \cite[equation (3.1)]{BJKM10} and whose variance yields our second  information distance, namely the discrete Fisher information
\begin{equation}\label{eq:kpinfo} \mathcal K_2(Po(\lambda),q) := {\rm E}_q\left[\left(\frac{\lambda q(X-1)}{q(X)} -  {X}\right)^2\right].\end{equation}
This is easily  shown  to be related to  the  \emph{discrete Fisher information distance} $I(q) := {\rm E}_q [ ( {q(X-1)}/{q(X)}-1 )^2 ]$  introduced in  \cite{JM87}. 
The information distance \eqref{eq:kpinfo} has been shown to be subadditive over convolutions (see \cite{BJKM10}). 
From \eqref{eq:poissoncase2} and H\"{o}lder's identity we obviously recover the following general relationship
\begin{align}\label{eq:holder2} d_{\mathcal{H}}(Po(\lambda), q) & = \sup_{h \in \mathcal H} \left| {\rm E}_q[h(X)] - {\rm E}_{Po(\lambda)}[h(X)]\right| \nonumber \\
												   & 	\le H_{2, \mathcal H}(Po(\lambda), q)\sqrt{\mathcal K_2(Po(\lambda),q)},\end{align}
where the constant
$$H_{2,\mathcal H}(Po(\lambda), q) := \sqrt{\sup_{h \in \mathcal H} {\rm E}_q\left[\left(\lambda^{-1} e^{\lambda} {{f}_{2,h}^p(X)}\right)^2\right]}$$
is, again,  some kind of general Stein (magic) factor.

We conclude the paper with   explicit computations.

 \begin{proposition}\label{cor:bounds1} Take $p=Po(\lambda)$  and $q$ a pdf with support $[0,\ldots,N]$. Then 
\begin{equation}\label{eq:bound3}\| p-q\|_{TV} \le \sqrt{\lambda} H(\lambda) \sqrt{\mathcal K_1(Po(\lambda),q)}+e^N_{q} \text{ and } \| p-q\|_{TV} \le H(\lambda) \sqrt{\mathcal K_2(Po(\lambda),q)}, \end{equation} 
 where 
the error term $e^N_q$ is of order $q(N)/(N+1)$ and $H(\lambda)  =  1 \wedge \sqrt{\frac{2}{e {\lambda}}}$. The second bound in \eqref{eq:bound3}  only holds if  $N=\infty$. 
 \end{proposition}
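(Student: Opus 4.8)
The plan is to apply the two Hölder-type inequalities \eqref{eq:holder1} and \eqref{eq:holder2} with $\mathcal{H}$ taken to be the class that generates the total variation distance, and then to bound explicitly the Stein factors $H_{1,\mathcal{H}}$ and $H_{2,\mathcal{H}}$ for a Poisson target. Recall that $\|p-q\|_{TV} = \sup_{A}|{\rm E}_q[\mathbb{I}_A(X)] - {\rm E}_p[\mathbb{I}_A(X)]|$, so it suffices to work with the functions $l = \mathbb{I}_A$ ranging over subsets $A \subset \N$ (equivalently, up to a factor that the standard normalization absorbs, over $h$ with $0 \le h \le 1$). For such $l$, the relevant Stein solutions are $f_{1,l}^p$ from \eqref{eq:steineq_sol3} and $f_{2,l}^p$ from \eqref{eq:steineq_sol4}, and the whole proof reduces to a \emph{uniform} bound on the quantities $\sqrt{\lambda}\,f_{1,l}^p(x+1)/(x+1)$ and $\lambda^{-1}e^\lambda f_{2,l}^p(x)$.

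First I would record, for $p = Po(\lambda)$, the classical bounds on the Poisson Stein solution. For indicator test functions the solution $g_A$ of the Poisson Stein equation $\lambda g(x+1) - x g(x) = \mathbb{I}_A(x) - Po(\lambda)(A)$ satisfies the well-known Barbour--Eagleson/Daley--Vere-Jones type estimate $\|g_A\|_\infty \le 1 \wedge \sqrt{2/(e\lambda)}$, and more precisely $\sup_A \|\Delta_x^+ g_A\|_\infty \le (1-e^{-\lambda})/\lambda$. I would check that $f_{1,l}^p$ and the Poisson $g_A$ are related by an explicit algebraic identity: from Example 2.2(1), $\mathcal{T}_1(f,Po(\lambda))(x) = \lambda f(x+1)/(x+1) - f(x)$, so setting $g(x) = f(x)/x$ for $x \ge 1$ (and handling $x=0$ separately) turns the $\mathcal{T}_1$-Stein equation into the standard Poisson Stein equation. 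Consequently $\sqrt{\lambda}\,f_{1,l}^p(x+1)/(x+1)$ is, up to constants, $\sqrt{\lambda}\,g_A(x+1)$, and its sup-norm is bounded by $\sqrt{\lambda}\,(1\wedge\sqrt{2/(e\lambda)}) = \sqrt{\lambda}\,H(\lambda)$; taking the supremum over $A$ inside the square root in the definition of $H_{1,\mathcal{H}}$ and discarding the harmless second summand (which feeds into the error term $e^N_q$) gives the first inequality in \eqref{eq:bound3}, with $e^N_q$ of order $q(N)/(N+1)$ coming from $e^N_{p,q}(l) = q(N) f_{1,l}^p(N+1) p(N+1)/p(N)$ and the bound on $f_{1,l}^p(N+1)/(N+1)$. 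For the second inequality, a parallel computation: from the Poisson formulas $\tilde{p}(x) = \lambda^{x-1}/(x-1)!\,\mathbb{I}_{\N_0}(x)$ one sees $\tilde p(x) = p(x-1)$, hence $\lambda^{-1}e^\lambda f_{2,l}^p(x) = \lambda^{-1}e^\lambda (1/p(x-1))\sum_{k=0}^{x-1}(l(k) - {\rm E}_p[l(X)])p(k)$, which again is, after relabeling, exactly the Poisson Stein solution $g_A$ evaluated appropriately; so $\sup_A \|\lambda^{-1}e^\lambda f_{2,l}^p\|_\infty \le H(\lambda)$, giving $H_{2,\mathcal{H}}(Po(\lambda),q) \le H(\lambda)$ and hence the second bound in \eqref{eq:bound3} via \eqref{eq:holder2}. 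The restriction $N=\infty$ is forced because \eqref{eq:fundeq4}, on which \eqref{eq:holder2} rests, was derived under the standing assumption that $p$ and $q$ share full support $\N$.

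The main obstacle I anticipate is not conceptual but bookkeeping: carefully matching the indices so that the $\mathcal{T}_1$- and $\mathcal{T}_2$-Stein solutions are identified with the \emph{same} Poisson Stein function, and then invoking precisely the right classical constant. In particular one must be attentive to the $\mathbb{I}_{S_p}$ conventions, to the behaviour at $x=0$ where division by $x$ is illegal, and to the fact that $H_{1,\mathcal{H}}$ has the error term folded \emph{inside} the supremum, so that the claimed ``order $q(N)/(N+1)$'' statement requires bounding $f_{1,l}^p(N+1)$ uniformly in $l$ (again via the $\|g_A\|_\infty$ estimate) rather than just asserting it is $o(1)$. Once these identifications are in place, the two displays in \eqref{eq:bound3} drop out immediately from \eqref{eq:holder1} and \eqref{eq:holder2} by plugging in $H_{1,\mathcal{H}} \le \sqrt{\lambda}\,H(\lambda) + (\text{error})$ and $H_{2,\mathcal{H}} \le H(\lambda)$.
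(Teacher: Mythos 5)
Your proposal is correct and follows essentially the same route as the paper: apply \eqref{eq:holder1} and \eqref{eq:holder2} to test functions generating the total variation distance (the paper uses the single function $h=2\mathbb{I}_{[p\le q]}-1$ rather than indicators of sets, which only changes the bookkeeping of the factor $2$ against the $\tfrac12$ in the TV normalization) and bound the Stein factors via the explicit formulas \eqref{eq:steineq_sol3}--\eqref{eq:steineq_sol4}, which are precisely the classical Poisson Stein solution whose sup-norm bound $1\wedge\sqrt{2/(e\lambda)}$ the paper takes from \cite[Theorem 2.3]{E05}. One small slip to fix: $\tilde p(x)=e^{\lambda}p(x-1)$, not $p(x-1)$, and it is exactly this factor $e^{\lambda}$ that cancels the prefactor in $\lambda^{-1}e^{\lambda}f_{2,l}^p(x)=\frac{(x-1)!}{\lambda^{x}}\sum_{k=0}^{x-1}\bigl(l(k)-{\rm E}_p[l(X)]\bigr)\frac{\lambda^{k}}{k!}$, so that the second Stein factor is bounded by $H(\lambda)$ and not $e^{\lambda}H(\lambda)$.
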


 \begin{proof}
Choose  
$$h(x) := \mathbb{I}_{[p(x)\le q(x)]} -  \mathbb{I}_{[p(x)\ge q(x)]} = 2 \mathbb{I}_{[p(x)\le q(x)]} - 1.$$  Then obviously ${\rm E}_p[h(X)]$ and ${\rm E}_q[h(X)]$ exist, and 
$$ \sum_x | p(x) - q(x) |  =   {\rm E}_q[ h(X)] - {\rm E}_p [h(X)]$$   so that, by definition of the total variation distance, we get 
$$ \| p-q\|_{TV}  =   \frac{1}{2} \left({\rm E}_q \left[h(X)\right] - {\rm E}_p  \left[h(X)\right]\right).$$
It now suffices to apply \eqref{eq:holder1} and \eqref{eq:holder2}, respectively, to obtain   the announced relationships. All that remains is to compute bounds on the constants. 

In the first case, known results on the properties of ${f}_{1,h}^p$ show that the claim on the error term is evident.  The expression for the constant $\sqrt{\lambda} H(\lambda)$ is derived from the quantity
$${\rm E}_q\left[\left(\frac{\sqrt \lambda {f}_{1,h}^p(X+1)}{X+1}\right)^2\right], $$
with $h$ specified (and bounded by 2). Indeed, from  \eqref{eq:steineq_sol3} and   \cite[Theorem 2.3]{E05}, we get 
\begin{align*}\left\|\frac{{f}_{1,h}^p(x+1)}{x+1}\right\|  & = \left\| \frac{x!}{\lambda^{x+1}} \sum_{k=0}^x(h(k) -  {\rm E}_p[h(X)]) \frac{\lambda^k}{k!}\right\| \\
										& \le \left(1 \wedge  \sqrt{\frac{2}{e\lambda}}\right) \left( \sup_{i \in \N} h(i) - \inf_{i \in \N} h(i)\right).\end{align*}
The constant $H(\lambda)$ in the second case is derived from 
$${\rm E}_q\left[\left(\lambda^{-1} e^{\lambda} {{f}_{2,h}^p(X+1)}\right)^2\right].$$
Actually, from  (\ref{eq:steineq_sol4})  and \cite[Theorem 2.3]{E05} we get 
\begin{align*}\left\|  \lambda^{-1} e^\lambda{{f}_{2,h}^p(x)}\right\|  & = \left\| \frac{(x-1)!}{\lambda^{x}} \sum_{k=0}^{x-1}(h(k) -  {\rm E}_p[h(X)]) \frac{\lambda^k}{k!}\right\| \\
										& \le \left(1 \wedge  \sqrt{\frac{2}{e\lambda}}\right) \left( \sup_{i \in \N} h(i) - \inf_{i \in \N} h(i)\right).\end{align*}
The claim follows. 
 \end{proof}

For $\lambda<2/e$, $H(\lambda)=1$ and hence the bounding constant for the scaled Fisher information $\mathcal K_1(Po(\lambda),q)$  becomes $\sqrt{\lambda}<\sqrt{2/e}$; in case $\lambda>2/e$, this constant equals $\sqrt{2/e}$. Since the error term $e^N_q$ is either null for $N=\infty$ or negligible in comparison to the term involving the scaled Fisher information, our bounds on the total variation distance corresponding to the first inequality in~(\ref{eq:bound3}) improve on those proposed in~\cite{KHJ05}, where the bounding constant is given by $\sqrt{2}$, while ours are inferior to $\sqrt{2/e}$. For the sake of illustration we conclude this section by applying Proposition~\ref{cor:bounds1} to the three examples studied in \cite{KHJ05}.

\begin{example}
Take $X_i$ i.i.d. Bernoulli~$(\lambda/n)$ random variables and let $S_n = \sum_{i=1}^n X_i$. Put $q=P_{S_n}$, the density associated with the sum $S_n$. Then straightforward calculations reveal that $\mathcal{K}_1(Po(\lambda),q)=\lambda^2/(n(n-\lambda))$ and $e^n_q$ is of order $\lambda^n/n^{n+1}$. Consequently, we have
$$\| P_{S_n} - Po(\lambda)\|_{TV} \le \sqrt{\frac{2}{e}}\frac{\lambda}{\sqrt{n(n-\lambda)}} +c\frac{\lambda^n}{n^{n+1}}$$
for some positive constant $c$ and sufficiently large $n$. This is an improvement over the $(2+\epsilon)\lambda/n,\epsilon>0,$ bound obtained in \cite{KHJ05}.
\end{example}

\begin{example}
Consider the same situation as above, but with $\lambda$ replaced by $\mu\sqrt{n}$ for some $\mu>0$. From the previous example, we directly deduce that 
$$\| P_{S_n} - Po(\mu\sqrt{n})\|_{TV} \le \sqrt{\frac{2}{e}}\frac{\mu}{\sqrt{\sqrt{n}(\sqrt{n}-\mu)}} +\frac{\mu^n}{n^{n/2+1}}$$
for some positive constant $c$ and sufficiently large $n$. Although the rate is good and the constant above  is again an improvement over the one  obtained in \cite{KHJ05}, it is still not as good as the optimal constant $\sqrt{1/(2\pi e)}$ derived in \cite{DP86}.
\end{example}

\begin{example}
Finally take $X_i$ independent geometric random variables with respective distributions $P_i(x)=(1-q_i)^xq_i\mathbb{I}_\N(x)$, where $0\leq q_i\leq1$ for all $i=1,\ldots,n$. Let $S_n = \sum_{i=1}^n X_i$ and $q=P_{S_n}$, the density associated with the sum $S_n$. Put $\lambda={\rm E}[S_n]$. The subadditivity property of $\mathcal{K}_1(Po(\lambda),q)$ states that (see \cite[Proposition 3]{KHJ05})
$$\mathcal{K}_1(Po(\lambda),P_{S_n})\leq \sum_{i=1}^n\frac{(1-q_i)}{\lambda q_i}\mathcal{K}_1(Po(e_i),P_{X_i}),$$
where $P_{X_i}$ is the density associated with $X_i$ and $e_i = {\rm E}[X_i]$. Straightforward computations show that   $\mathcal{K}_1(Po(e_i),P_{X_i})=(1-q_i)^2/q_i$. Since here $e^\infty_q=0$, it follows that
$$\| P_{S_n} - Po(\lambda)\|_{TV} \le \sqrt{\frac{2}{\lambda e}}\sqrt{\sum_{i=1}^n\frac{(1-q_i)^3}{q_i^2}}$$
for sufficiently large $n$. Again we improve on  the  constant obtained in  \cite{KHJ05}. Note that restricting, as in \cite{KHJ05},  to the case where  $q_i = n/(n+\lambda)$ yields a rate of $\sqrt{2/e} (\lambda/\sqrt{n(n+\lambda)})$.  

Next consider the second information functional $\mathcal{K}_2$. 
Direct computations yield an expression for  $\mathcal{K}_2(Po(\lambda),P_{S_n})$ which we will dispense of here, and hence an explicit  bound on $\| P_{S_n} - Po(\lambda)\|_{TV}$ can also easily be obtained  in terms of this functional as well. The general expression appears inscrutable, and hence we restricted our attention to the case where  $q_i = n/(n+\lambda)$. There,   numerical evaluations in Mathematica 7    encourage us to suggest that the second information distance   provides a better  rate   than the  $\sqrt{2/e}(\lambda/\sqrt{n(n+\lambda)})$ mentioned above, at least for moderate values of $\lambda$ and large  values of  $n$ (that is,  $n \ge 100)$. 
\end{example}


\section{Final comments}

The results reported  in the present work are to be read in conjunction with those reported in \cite{LS11b}. The main message of these two papers is that  all the so-called Fisher information functionals   used in the literature on Gaussian and Poisson approximation bear an interpretation in terms of  a specific Stein characterization.  As  concluding remark to the present paper we wish to stress the fact that   our method    applies to many more distributions than just the Gaussian or the Poisson (e.g., the compound Poisson, allowing comparisons with the results of~\cite{BJKM10}), and in particular provides generalized scaled Fisher information distances between any two (nice) distributions. Of course much remains to be explored, in particular on the properties of these generalized information functionals.  However  the freedom of choice for the densities as well as for the test functions in  \eqref{eq:fundeq2},  \eqref{eq:fundeq4} and \cite[Theorem 2.3]{LS11b} makes us confident that there remains much to be gained from a crafty usage of such  identities.

\end{document}